\newtheorem{theorem}{Theorem}[section]
\newtheorem{lemma}[theorem]{Lemma}
\theoremstyle{definition}
\theoremstyle{theorem}
\newtheorem{proposition}{Proposition}[section]
\theoremstyle{remark}
\newtheorem*{remark}{Remark}
\theoremstyle{definition}
\newcommand{\R}{\mathbb{R}}
\newcommand{\Q}{\mathbb{Q}}
\newcommand{\Z}{\mathbb{Z}}
\newcommand{\C}{\mathbb{C}}
\newcommand{\A}{\mathbb{A}}
\newcommand{\E}{\mathbb{E}}
\newcommand{\ocal}{\mathcal{O}}
\newcommand{\id}[1]{\mathfrak{#1}}
\newcommand{\Real}{\text{Re}}
\begin{document}

\title{Probability Laws Concerning Zeta Integrals}
\date{}
\author{Grayson Plumpton} 
\thanks{University of Toronto, \texttt{grayson.plumpton@mail.utoronto.ca}}

\maketitle

\begin{abstract}
\noindent We give a probabilistic interpretation of the Dedekind zeta functions of $\Q(\sqrt{-1})$ and $\Q(\sqrt{-2})$ 
using zeta integrals and use this to show that the first two Li coefficients of these zeta functions are positive. 
This extends a result of Biane, Pitman, and Yor (2001) which considered the case of the Riemann zeta function. 
\end{abstract}

\section{Preliminaries}

\subsection{Introduction}

\noindent A central tool when studying the Riemann zeta function  
is its representation as a Mellin transform. 
Recall that the Mellin transform of a Schwartz function $f$ is the integral transform $\int_0^\infty t^sf(t)\frac{dt}{t}$, where $s \in \C$. 
This integral also has a probabilistic interpretation: if $f(t)\frac{dt}{t}$ is a probability measure on $(0, \infty)$ induced by the random 
variable $Y$, then the Mellin transform of $f$ is $\E(Y^s)$. This is the central idea behind \cite{laws}, in which the Riemann 
zeta function is studied by expressing $\xi(s) = s(s-1)\pi^{-s/2}\Gamma(s/2)\zeta(s)$ 
as a Mellin transform, then interpreting it as being the $s$th moment of a random variable. In this paper, we use 
zeta integrals, essentially Mellin transforms over the locally compact abelian group $\A^\times$, to extend this construction to the 
fields $\Q(\sqrt{-1})$ and $\Q(\sqrt{-2})$. Our main result is the following theorem:

\begin{theorem}\thlabel{rv}
Let $K$ be one of the fields $\Q$, $\Q(\sqrt{-1})$, or $\Q(\sqrt{-2})$. Then there is 
a random variable $X$ such that, for every $s \in \C$, $\E(X^s) = |D|^{-s/2}\xi_K(s)$, where $D$ is the discriminant of $K$ 
and $\xi_K$ is the Dedekind xi function.
\end{theorem}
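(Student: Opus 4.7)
My plan is to realize $|D|^{-s/2}\xi_K(s)$ as a Tate-style zeta integral, reduce it to an integral over $(0,\infty)$ against a theta series, and then recast that integral in the form $\int_0^\infty t^s\,\rho_K(t)\,\frac{dt}{t}$ with $\rho_K \geq 0$; the random variable $X$ will then be the one with density proportional to $\rho_K(t)/t$. For the setup I would take the Schwartz--Bruhat function $\Phi = \bigotimes_v \Phi_v$ on $\A_K$ that is Gaussian at the single archimedean place (in the real or complex normalization appropriate to $K$) and equals the characteristic function of $\ocal_v$ at each finite place. Its global zeta integral $\int_{\A_K^\times} \Phi(x)\,|x|_\A^{s}\, d^\times x$ evaluates, by the standard local computations of Tate's thesis, to a constant times $|D|^{s/2}$ times the appropriate gamma factor times $\zeta_K(s)$; multiplying by $s(s-1)$ then produces $\xi_K(s)$ up to a tracked normalization.

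Since each of the three fields has class number one and finite unit group, the quotient $\A_K^\times / K^\times$ unfolds modulo units to an integral over $(0,\infty)$, and Poisson summation for $\ocal_K \subset K_\infty$ together with Hecke's splitting at $t=1$ gives an identity of the shape
\[
|D|^{-s/2}\xi_K(s) \;=\; s(s-1)\int_0^\infty t^s\,\theta_K(t)\,\frac{dt}{t},
\]
where $\theta_K(t) = \sum_{\alpha \in \ocal_K \setminus \{0\}} e^{-\pi N(\alpha) t /\sqrt{|D|}}$ after absorbing constants. To eliminate the $s(s-1)$ prefactor I would integrate by parts twice, using that $-t\,d/dt$ represents multiplication by $s$ on Mellin transforms; explicitly,
\[
s(s-1)\int_0^\infty t^s\,f(t)\,\frac{dt}{t} \;=\; \int_0^\infty t^s\bigl(2t f'(t) + t^2 f''(t)\bigr)\,\frac{dt}{t},
\]
valid whenever $f$ and $f'$ decay appropriately at both ends (as $\theta_K$ does, thanks to its Gaussian tail at infinity and to the theta functional equation at zero). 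Applying this with $f = \theta_K$ delivers the Mellin representation with $\rho_K(t) = t\bigl(t\,\theta_K(t)\bigr)''$.

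The hard part will be verifying pointwise positivity of $\rho_K(t)$: each summand is $\pi N(\alpha)\bigl(\pi N(\alpha)t/\sqrt{|D|} - 2\bigr)e^{-\pi N(\alpha) t /\sqrt{|D|}}$, which is negative for small $t$, so positivity must arise from global cancellation over the lattice. For $K = \Q$ this is settled in \cite{laws} by identifying $X$ with an explicit functional of a Brownian excursion. For the two imaginary-quadratic cases I see two plausible routes: first, adapt the excursion argument to a two-dimensional Brownian motion calibrated to $\ocal_K \subset \C$; or second, use the factorization $\zeta_K(s) = \zeta(s)\,L(s,\chi_D)$ together with the Legendre duplication formula to rewrite $|D|^{-s/2}\xi_K(s)$ as a constant multiple of $\xi_\Q(s)\,\xi(s,\chi_D)$, and then set $X = X_\Q\cdot Y$ for $X_\Q$ the Biane--Pitman--Yor variable and $Y$ an independent positive random variable realizing the completed Dirichlet $L$-function; this reduces positivity of $\rho_K$ to the one-dimensional positivity of the density attached to $L(s,\chi_D)$. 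Either way, once positivity is in hand, evaluating at $s=0$ fixes the total mass and completes the construction.
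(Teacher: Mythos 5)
Your first half runs parallel to the paper's construction: the paper also builds the zeta integral from the Gaussian at the archimedean place and the characteristic functions of $\ocal_v$ at the finite places, and it absorbs the $s(s-1)$ factor by replacing the archimedean Gaussian $g_1$ with $f_\infty=\frac{d}{dx}\left[x^2g_1'(x)\right]$ (real case) or the analogous radial derivative (complex case) --- which is exactly your Mellin-side integration by parts $s(s-1)M(f)=M(2tf'+t^2f'')$ performed locally. Up to normalization your $\rho_K(t)=t\bigl(t\theta_K(t)\bigr)''$ is the paper's density $\psi(t)=c_K^{-1}t^{-1}\int_{\A_t^\times}f(x)\,d^\times x$ after unfolding over $K^\times$ and the units. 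So far, so good.

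The genuine gap is the positivity of $\rho_K$, which you correctly identify as ``the hard part'' but then only gesture at with two unexecuted routes. Route (a), adapting the Brownian excursion identification to a two-dimensional process calibrated to $\ocal_K\subset\C$, is pure speculation with no indicated mechanism. Route (b) is worse than a gap: writing $X=X_\Q\cdot Y$ with $Y$ independent and $\E(Y^s)$ a normalized $\xi(s,\chi_D)$ presupposes that the completed Dirichlet $L$-function is itself the Mellin transform of a nonnegative density, i.e.\ pointwise positivity of the twisted theta sum $\sum_n\chi_D(n)\,n\,e^{-\pi n^2t/|D|}$ for all $t>0$ --- a separate positivity problem at least as delicate as the one you started with (and false for some characters of larger conductor), so the ``reduction'' does not reduce anything. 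The paper closes the gap directly and elementarily: after unfolding, the sum $\sum_{\ell\in\ocal_K\setminus\{0\}}f_\infty(x_\infty\ell)$ has every summand nonnegative once $|x_\infty|^2\ge 1/\pi$, because $f_\infty(z)=4\pi|z|^2(\pi|z|^2-1)e^{-2\pi|z|^2}\ge 0$ there; for $|x_\infty|^2<1/\pi$ one applies Poisson summation, using that $\widehat{f}_\infty=f_\infty$, to pass to the dual lattice $\ocal_K^*=\frac{1}{2\sqrt{-d}}\ocal_K$, whose nonzero vectors satisfy $|\ell^*|^2\ge\frac{1}{4d}$, whence $\pi|u\ell^*|^2-1\ge\frac{\pi^2}{4d}-1>0$ for $d=1,2$ and every summand on the dual side is again positive. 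This explicit lattice bound is the entire reason the theorem is stated only for $\Q$, $\Q(\sqrt{-1})$, and $\Q(\sqrt{-2})$ (it already fails to give termwise positivity for $d\ge 3$), and it is the step your proposal does not supply.
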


Using our random variable $X$, we apply some basic properties of cumulants to prove that the first two Li coefficients, introduced 
in \cite{li}, are 
positive for the fields $\Q(\sqrt{-1})$ and $\Q(\sqrt{-2})$. A remarkable result of \cite{li} is that the positivity of all Li coefficients of a Dedekind zeta function  
implies that all of its nontrivial zeroes lie on the critical line.

\subsection{Defining the Dedekind zeta and xi functions (the classical perspective)}

\noindent The following is well known background, details can be found in \cite{li} or \cite{neukirch}. 
Let $K$ be a number field with $r_1$ real and $r_2$ complex embeddings. 
For $\Real(s) > 1$, define the Dedekind zeta function $\zeta_K$ as 
$$\zeta_K(s) = \prod_{\id{p}} \frac{1}{1-N(\id{p})^{-s}}, $$
 the product being taken over all prime ideals of $\ocal_K$, and $N(\id{p}) := \#\ocal_K/\id{p}$. We 
define the real and complex Gamma factors as 
\begin{align*}
	\Gamma_\R(s) &= \pi^{-s/2}\Gamma(s/2), \\
	\Gamma_\C(s) &= (2\pi)^{1-s}\Gamma(s),
\end{align*}
so that the function $Z_K(s) = \Gamma_\R(s)^{r_1}\Gamma_\C(s)^{r_2}\zeta_K(s)$ is analytic on $\C$ except for simple poles 
at $s = 0$ and $s = 1$, and satisfies the functional equation 
$$Z_K(s) = |D|^{1/2-s}Z_K(1-s)$$
where $D$ is the discriminant of $K$. The residues of $Z_K$ at $s = 0$ and $s = 1$ are 
$-c_K$ and $|D|^{-1/2}c_K$ respectively, where 
$$c_K = \frac{2^{r_1}(2\pi)^{r_2}hR}{w}, $$
with $h$ being the class number of $K$, $R$ the regulator, and $w$ the number of roots of unity in $K$. We then define 
$$\xi_K(s) = c_K^{-1}s(s-1)|D|^{s/2}Z_K(s)$$
so that $\xi_K$ is entire, satisfies the symmetric functional equation $\xi_K(s) = \xi_K(1-s)$, and $\xi_K(0) = \xi_K(1) = 1$.

\subsection{The local zeta integral}

\noindent We will now try to understand the function $Z_K$ from the perspective of Fourier analysis on locally compact abelian groups. 
This is of course a rich theory which we cannot do justice; what is shown here 
is primarily meant to clarify notation or any conventions we may use, but for a more complete exposition see 
almost any introductory text on algebraic number theory such as \cite{neukirch}, or \cite{tate}. We begin with the local 
perspective. 

Absolute values on $K$ are of two types. First are those induced by prime ideals $\id{p} \subset \ocal_K$, and 
are called archimedean, finite, or discrete. Second are those induced by embeddings of $K$ into $\R$ or $\C$, 
and are called nonarchimedean or infinite (in particular, there are $r_1 + r_2$ archimedean absolute values on $K$). A 
theorem of Ostrowski says these are (up to equivalence) all the absolute values on $K$ \cite{neukirch}. 
We will denote 
these prime ideals and embeddings in the general by $v$ and call them the places of $K$,  
denoting the absolute value induced by such a place by $|\cdot|_v$, and the completion 
of $K$ with respect to $|\cdot|_v$ by $K_v$ (hence, if $v$ is a real or complex embedding then $K_v$ is $\R$ or $\C$ 
respectively, and otherwise $K_v$ will be a $\id{p}$-adic field for some prime ideal $\id{p}$). For $v$ nonarchimedean, 
the valuation ring of $K_v$ is denoted by $\ocal_v$. 

Each $K_v$ is a local field and hence locally compact, so that its additive group is a locally compact abelian group 
and we can define on it a Haar measure $dx_v$, unique up to multiplication by a constant. To ensure $dx_v$ is self-dual, 
define it to be the usual Lebesgue measure if $v$ is real archimedean, twice the usual Lebesgue measure 
if $v$ is complex archimedean, and the unique Haar measure assigning $\ocal_v$ measure $N(\id{d}_v)^{-1/2}$ if $v$ 
is nonarchimedean, where $\id{d}_v$ is the different ideal of $K_v$. When there is no ambiguity we may denote 
$dx_v$ simply by $dx$. 

A function $f:K_v \to \C$ is called a Schwartz-Bruhat function if either $v$ is archimedean and $f$ is 
a Schwartz function, or $v$ is nonarchimedean and $f$ is a locally constant function of compact support. The 
$\C$-vector space of Schwartz-Bruhat functions is denoted $\mathscr{S}(K_v)$. 

We can also define a multiplicative Haar measure on the group \(K_v^\times\) by letting 
\begin{align*}
d^\times x_v &= \dfrac{dx_v}{|x|_v}, &&\text{if \(v\) is archimedean} \\
d^\times x_\id{p} &= \dfrac{N(\id{p})}{N(\id{p})-1}\dfrac{dx_\id{p}}{|x|_\id{p}}, &&\text{if \(v = \id{p}\) is nonarchimedean}.
\end{align*}

For $f \in \mathscr{S}(K_v)$, we can now define the local zeta integral for $\Real(s) > 1$ as 
$$Z_v(f, s) := \int_{K_v^\times}|x|_v^sf(x)d^\times x_v.$$
(In full generality the zeta integral will also have a factor $\eta(x)$ where $\eta$ is a unitary character of 
the group $K_v^\times$, but for our application we will simply take $\eta = 1$.)

\subsection{The global zeta integral}

Denote by $\A$ the ring of adeles and by $\A^\times$ the group of ideles of $K$. Functions $f: \A \to \C$ of the form 
$f = \bigotimes_v f_v$ with $f_v \in \mathscr{S}(K_v)$ for every $v$ and $f_\id{p} = 1_{\ocal_\id{p}}$ for all but finitely many 
nonarchimedean places $\id{p}$ form a basis for $\mathscr{S}(\A)$ as a $\C$-vector space. We denote by $dx$ the self-dual Haar 
measure on $\A$ formed with the self-dual $dx_v$, so that integrating functions of the form $f = \bigotimes f_v$ factors as 
$$\int_\A f(x) dx = \prod_v \int_{K_v}f_v(x_v)dx_v.$$
The same can be done for the group $\A^\times$, for which the Haar measure will be denoted $d^\times x$. A complete account of the construction of these measures can be found in \cite{tate}.

In general, global constructions on $\A$ and $\A^\times$, such as absolute values, additive characters, the Fourier transform, 
and zeta integrals factor through local ones. That is, for $x = (x_v)_v \in \A^\times$ we define the idelic norm 
as $|x| = \prod_v |x_v|_v$, additive characters on $\A$ are generated by characters of the form $\psi = (\psi_v)_v$, 
so that for $f \in \mathscr{S}(\A)$ the integral of the Fourier transform factors and we have 
$\widehat{f} = \bigotimes_v \widehat{f}_v$, and similarly the global zeta integral defined for $f = \bigotimes_v f_v$ 
as 
$$Z(f, s) = \int_{\A^\times}|x|^s f(x) d^\times x$$
factors as $Z(f, s) = \prod_v Z_v(f_v, s)$. In this case, the main result of Tate's thesis is the functional equation 
$$Z(f, s) = Z(\widehat{f}, 1-s).$$

An important step in proving this is noticing that the idelic norm is trivial on the diagonal $K^\times \subset \A^\times$ (Artin's product formula), 
so that when calculating the zeta integral of $f$ we can make the following change: 
$$\int_{\A^\times} |x|^sf(x) d^\times x = \int_{\A^\times/K^\times}|x|^s\left(\sum_{\kappa \in K^\times}f(x\kappa)\right)d^\times x.$$
This trick will also be of use to us when constructing our density function. It is not immediately obvious that our 
sum over $K^\times$ will converge; however, the nonarchimedean part of $f$ forces the summands to be zero 
outside of a fractional ideal of $K$, so that one is ultimately only summing the archimedean part of $f$ over a 
lattice in $K\otimes \R \simeq \R^{r_1}\times \C^{r_2}$. 

\section{Constructing the random variable}

We previously constructed the function $Z_K$ as being a product over the finite prime ideals of $\ocal_K$, 
and one gamma factor for each archimedean place of $K$; in other words, a product over each of the places 
of $K$. This is analogous to each of our global constructions arising as products over the places of $K$. 
We will now see that this is not a coincidence, and that the function $Z_K$ arises as, in a sense, the simplest zeta integral on 
$\A^\times$. 

\subsection{The functions $Z_K$ and $s(s-1)Z_K(s)$ as zeta integrals}

For each finite place $\id{p}$ of $K$, let 
$$g_\id{p} = N(\id{d}_\id{p})^{1/2}1_{\ocal_\id{p}}$$
(the constant cancels out the measure of $\ocal_\id{p}$). Then for each real embedding \(\sigma\), let 
\[g_{\sigma}(x) = e^{-\pi x^2}\]
and for each complex embedding \(\tau\), let 
\[g_{\tau}(z) = e^{-2\pi z\overline{z}} = e^{-2 \pi |z|^2} .\]

It is straightforward to check that for each finite prime \(\id{p}\), real embedding $\sigma$, and complex embedding $\tau$ that 
\begin{align*}
Z_\id{p}(f_\id{p}, s) &= \frac{1}{1 - N(\id{p})^{-s}},\\
Z_{\sigma}(g_{\sigma}, s) &= \Gamma_\R(s),\\
Z_{\tau}(g_{\tau}, s) &= \Gamma_\C(s).
\end{align*}
Hence, with each \(g_v\) defined above,  \(d^\times x\) the Haar measure on \(\A^\times\), 
and \(g = \bigotimes_v g_v\), we get 
\begin{align*}
Z(g, s) &= \prod_v Z_v(g_v, s) \\
	&= \Gamma_\R(s)^{r_1}\Gamma_\C(s)^{r_2}\prod_\id{p}\frac{1}{1 - N(\id{p})^{-s}} \\
	&= Z_K(s).
\end{align*} 
Had we started here, we could use some basic properties of zeta integrals to prove 
the functional equation of $Z_K$ mentioned previously. Obtaining the $s(s-1)$ factor of $\xi_K$ is more difficult. To do so, 
we will now restrict our attention to number fields with precisely one archimedean place (so that $K = \Q$, or $K$ is imaginary 
quadratic). Denote this single archimedean place by $\infty$, set $f_\id{p} = g_\id{p}$ as before, and let
$$
f_\infty(x) = 
\begin{dcases}
2\pi x^2(2\pi x^2 - 3)e^{-\pi x^2} \quad &\text{if $\infty$ is real, } \\
4\pi|x|^2(\pi|x|^2 - 1)e^{-2\pi |x|^2} &\text{if $\infty$ is complex.}
\end{dcases}
$$
\begin{lemma}
	With \(f_{\infty}\) defined above and $s \in \C^\times$, we have \(\widehat{f}_{\infty} = f_{\infty}\) and
	\[Z_{\infty}(f_{\infty}, s) = s(s-1)\Gamma_{K_\infty}(s)\]
	where $\Gamma_{K_\infty}$ is the Gamma factor for $K_\infty$.
\end{lemma}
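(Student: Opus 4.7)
I treat the real and complex cases separately; in each case the lemma has two independent parts, the integral identity and the self-duality, which I verify by direct calculation.

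For the integral identity in the real case, $f_\infty$ is even, so
\[
Z_\infty(f_\infty, s) = 2\int_0^\infty x^{s-1}\bigl(4\pi^2 x^4 - 6\pi x^2\bigr)e^{-\pi x^2}\,dx,
\]
and splitting into two Gamma integrals via $\int_0^\infty x^{\alpha-1}e^{-\pi x^2}\,dx = \tfrac{1}{2}\pi^{-\alpha/2}\Gamma(\alpha/2)$, then applying $\Gamma((s+4)/2) = \tfrac{s(s+2)}{4}\Gamma(s/2)$ together with $\Gamma((s+2)/2) = \tfrac{s}{2}\Gamma(s/2)$, collapses the expression to $(s(s+2)-3s)\pi^{-s/2}\Gamma(s/2) = s(s-1)\Gamma_\R(s)$. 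In the complex case I pass to polar coordinates $z = re^{i\theta}$, using that $|z|_v = |z|^2$ and the self-dual measure is $2\,dx\,dy$, and substitute $u = 2\pi r^2$; the integral reduces to $2\pi(2\pi)^{-s}\bigl[\Gamma(s+2) - 2\Gamma(s+1)\bigr]$, and the bracket equals $(s-1)\Gamma(s+1) = s(s-1)\Gamma(s)$, giving $s(s-1)\Gamma_\C(s)$.

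For self-duality in the real case, I start from $\widehat{e^{-\pi x^2}} = e^{-\pi x^2}$ and iteratively apply the identity $\widehat{xf} = \tfrac{i}{2\pi}(\widehat{f})'$ to compute
\[
\widehat{x^2 e^{-\pi x^2}}(\xi) = \Bigl(\tfrac{1}{2\pi} - \xi^2\Bigr)e^{-\pi\xi^2}, \qquad
\widehat{x^4 e^{-\pi x^2}}(\xi) = \Bigl(\xi^4 - \tfrac{3\xi^2}{\pi} + \tfrac{3}{4\pi^2}\Bigr)e^{-\pi\xi^2};
\]
the combination $4\pi^2\widehat{x^4 e^{-\pi x^2}} - 6\pi\widehat{x^2 e^{-\pi x^2}}$ then returns $f_\infty(\xi)$ once the constant and $\xi^2$ stray terms cancel. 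For the complex case, rather than pushing differentiation identities through in two variables, my preferred route is to introduce the family $g_t(z) = e^{-2\pi t|z|^2}$, separate the $x$ and $y$ integrations and complete the square to show $\widehat{g_t}(w) = t^{-1}e^{-2\pi|w|^2/t}$, and then differentiate once and twice in $t$ at $t = 1$ to extract the Fourier transforms of $|z|^2 g$ and $|z|^4 g$; substituting these into $\widehat{f_\infty} = 4\pi^2\widehat{|z|^4 g}(w) - 4\pi\widehat{|z|^2 g}(w)$ again returns $f_\infty(w)$ after the lower-order terms cancel. The main obstacle in both cases is bookkeeping rather than any deep idea: the coefficients in $f_\infty$ are engineered precisely so that the lower-order terms generated by differentiating Gaussians cancel exactly, and this cancellation has to be checked by hand.
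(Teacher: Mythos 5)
Your proposal is correct, and both halves of it take a genuinely different route from the paper. For the integral identity, the paper writes $f_\infty$ as an exact derivative --- $\frac{d}{dx}\left[x^2g_1'(x)\right]$ in the real case, $\frac{d}{dr}\left[r^3g_2'(r)\right]$ after passing to polar coordinates in the complex case --- and integrates by parts twice, which produces the factor $s(s-1)$ structurally in one stroke; you instead expand into individual Gamma integrals and collapse them with the recursion $\Gamma(\alpha+1)=\alpha\Gamma(\alpha)$. Your computations check out (I verified the coefficients $4\pi^{-s/2}\Gamma((s+4)/2)-6\pi^{-s/2}\Gamma((s+2)/2)=s(s-1)\Gamma_\R(s)$ and $(2\pi)^{1-s}\left[\Gamma(s+2)-2\Gamma(s+1)\right]=s(s-1)\Gamma_\C(s)$), at the cost of more bookkeeping but with the advantage of needing no boundary-term discussion. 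The larger divergence is in the self-duality: the paper argues globally, comparing $Z(f,s)=s(s-1)Z(g,s)$ with the adelic functional equation $Z(\widehat f,1-s)=Z(f,s)$ and the self-duality of the standard function $g$, then invoking ``uniqueness of the zeta integral'' to conclude $\widehat f_\infty=f_\infty$ from equality of the local zeta integrals. That step quietly relies on injectivity of the local Mellin transform on Schwartz functions (and on Tate's global theorem), whereas your direct computation --- iterating $\widehat{xf}=\tfrac{i}{2\pi}(\widehat f)'$ in the real case and differentiating the heat-kernel identity $\widehat{g_t}(w)=t^{-1}e^{-2\pi|w|^2/t}$ in $t$ in the complex case --- is entirely local and self-contained; I confirmed both of your displayed transforms and the cancellation of the constant and quadratic stray terms. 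The only things worth making explicit in a final write-up are the Fourier conventions on $\C$ (the character $z\mapsto e^{-2\pi i\,\mathrm{Tr}_{\C/\R}(zw)}$ and the measure $2\,dx\,dy$, which together make $e^{-2\pi|z|^2}$ self-dual) and the routine justification for differentiating under the integral sign in $t$.
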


\begin{proof}
First, suppose \(\infty\) is real and let \(g_1(x) = e^{-\pi x^2}\). Then 
\begin{align*}
f_{\infty}(x) &= 2\pi x^2(2\pi x^2 - 3)e^{-\pi x^2} \\
	&= \frac{d}{dx}\left[ x^2g_1'(x)\right].
\end{align*}
A straightforward computation shows that
\[2\int_0^\infty x^s g_1(x)\frac{dx}{x} = \Gamma_\R(s),\] and because \(g_1\) is a Schwartz function, it satisfies the rapidly decreasing condition: 
for any integers \(n, m \geq 0\), 
\[\lim_{x \to \infty} x^m \frac{d^n}{dx^n}g_1(x) = 0.\]
Hence, integration by parts gives
\begin{align*}
Z_{\infty}(f_{\infty}, s) &= 2\int_0^\infty x^s \frac{d}{dx}\left[ x^2g_1'(s)\right]\frac{dx}{x} \\
	&= 2s(s-1)\int_0^\infty x^sg_1(x)\frac{dx}{x} \\ 
	&= s(s-1)\Gamma_\R(s).
\end{align*}

\noindent Next suppose \(\infty\) is complex. Then 
\[Z_{\infty}(f_{\infty}, s) = \int_{\C^\times} f_{\infty}(z)|z|^{2s}d^\times z.\] 
To evaluate the above integral, do the change of variables \(z = re^{i\theta}\). 
Then \(d^\times x = \frac{2drd\theta}{r}\) and 
\begin{align*}
Z_{\infty}(f_{\infty}, s) &= \int_{\C^\times} f_{\infty}(z)|z|^{2s}d^\times z \\
	&= 4\pi \int_0^{2\pi}\int_0^\infty r^2(\pi r^2 - 1)e^{-2\pi r^2}r^{2s}\frac{2drd\theta}{r} \\
	&= 16\pi^2 \int_0^\infty r^{2s+1} (\pi r^2 - 1)e^{-2 \pi r^2} dr.
\end{align*}
Let \(g_2(r) = e^{-2\pi r^2}\) and note that 
\[\frac{d}{dr}\left[r^3g'_2(r)\right] = 16\pi r^3(\pi r^2 - 1)e^{-2\pi r^2}.\]
Hence, the integral we are evaluating becomes 
\[Z_{\infty}(f_{\infty}, s) = \pi\int_0^\infty r^{2s-2}\frac{d}{dr}\left[r^3g'_2(r)\right] dr.\]
 \(g_2\) satisfies the same rapidly decreasing condition as \(g_1\), so integration by parts gives us
\begin{align*}
Z_{\infty}(f_{\infty}, s) &= \pi(2s-2)2s\int_0^\infty r^{2s-1}g(r)dr \\
	&= 4\pi s(s-1)\int_0^\infty r^{2s-1}e^{-2\pi r^2}dr \\
	&= s(s-1)(2\pi)^{1-s}\Gamma(s) \\
	&= s(s-1)\Gamma_\C(s).
\end{align*}
To prove the self-duality of \(f_{\infty}\), let \(f = \bigotimes_v f_v\), and \(g\) the standard adelic function for which 
\(Z(g, s) = Z_K(s)\). Then 
\[Z(\widehat{f}, 1-s) = Z(f, s) = s(s-1)Z(g, s) = s(s-1)Z(\widehat{g}, 1-s).\]
Because \(f\) and \(g\) differ only in the \(\infty\) place, we must have 
\[Z(\widehat{f}_{\infty}, 1-s) = s(s-1)Z(\widehat{g}_{\infty}, 1-s) = s(s-1)Z(g_{\infty}, 1-s) = Z(f_{\infty}, 1-s).\]
By the uniqueness of the zeta integral, we must have \(\widehat{f}_{\infty} = f_{\infty}\).
\end{proof}

Hence, taking $f = \bigotimes_v f_v$, we have $Z(f, s) = s(s-1)Z_K(s)$ for every $s \in \C^\times$.

\subsection{A probabilistic interpretation of Dedekind zeta functions of three number fields}
Suppose now that $K$ is one of the fields $\Q, \Q(\sqrt{-1})$ or $\Q(\sqrt{-2})$. We will use 
our function $f:\A_K^\times \to \C$ defined above to construct a random variable $X$ so that 
$\E[X^s] = c_K^{-1}s(s-1)Z_K(s)$. 

For $t \geq 0$, let $\A_t^\times$ denote the set of ideles of absolute value $t$, so that 
\begin{align*}
	Z(f, s) &= \int_{\A^\times}|x|^sf(x)d^\times x \\
		&= \int_0^\infty t^s\left(\int_{\A^\times_t}f(x)d^\times x\right) \frac{dt}{t}.
\end{align*}
Our goal is now to think of the function $c_K^{-1}t^{-1}\int_{\A^\times_t} f(x)d^\times x$ as the density of some 
random variable. 

\begin{lemma}\thlabel{positivelem}
For each $K$ above and $t \geq 0$, 
$$c_K^{-1}t^{-1}\int_{\A^\times_t}f(x)d^\times x \geq 0.$$ 
\end{lemma}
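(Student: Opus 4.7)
The plan is to use the ``unfolding'' identity from Section~1.4 to rewrite the slice integral as a concrete lattice sum, and then to deduce nonnegativity from the self-duality $\widehat{f}_\infty = f_\infty$ established in the previous lemma.

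Because $|\cdot|$ is trivial on $K^\times$, the slice integral disintegrates as
$$\int_{\A_t^\times} f(x) \, d^\times x = \int_{\A_t^\times / K^\times} \sum_{\kappa \in K^\times} f(\kappa x) \, d\bar\mu_t,$$
where $d\bar\mu_t$ denotes the quotient measure on the compact space $\A_t^\times/K^\times$. Each of the three fields has class number one, so I would pick a fundamental domain of the form $K_\infty^\times \times \hat{\ocal}^\times$ modulo the diagonal action of $\ocal_K^\times$, where $\hat{\ocal} := \prod_\id{p}\ocal_\id{p}$. On this domain, the nonarchimedean factor of $f$ forces $\kappa \in \ocal_K \setminus \{0\}$ and takes the constant value $\prod_\id{p} N(\id{d}_\id{p})^{1/2} = |D|^{1/2}$, exactly cancelling the Haar volume $|D|^{-1/2}$ of $\hat{\ocal}^\times$. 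Radial symmetry of $f_\infty$ then makes the remaining integrand constant on the archimedean slice $K_\infty^{(t)} := \{y \in K_\infty : |y|_\infty = t\}$, giving the closed form
$$\int_{\A_t^\times} f \, d^\times x = \mathrm{vol}\bigl(K_\infty^{(t)}/\ocal_K^\times\bigr) \sum_{\kappa \in \ocal_K \setminus \{0\}} f_\infty(\kappa y),$$
for any $y \in K_\infty$ with $|y|_\infty = t$. For $K = \Q$ this specializes to $2\sum_{n=1}^\infty f_\infty(nt)$, and for $K = \Q(\sqrt{-1})$ and $\Q(\sqrt{-2})$ it becomes the analogous sum over $\Z[i]$ or $\Z[\sqrt{-2}]$, weighted by the volume of a fundamental domain for $\ocal_K^\times$ acting on the circle of radius $\sqrt{t}$.

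The main obstacle is positivity: $f_\infty$ is negative on a neighborhood of the origin, so individual summands have both signs and the bound must come from global cancellation. To address this, I would antidifferentiate the identity $f_\infty(x) = (x^2 g_1'(x))'$ from the previous lemma once more to obtain $f_\infty = \phi''$ where $\phi(x) = \tfrac{1}{\pi}(\pi x^2 + 1)e^{-\pi x^2} > 0$, together with its manifestly positive complex analogue. Then I would apply Poisson summation on the lattice $\ocal_K$ and use $\widehat{f}_\infty = f_\infty$ to transform the sum into a Fourier-dual expression that, by Jacobi's theta functional equation, can be identified with a manifestly nonnegative theta-type quantity. For $K = \Q$ this recovers the probabilistic identification in \cite{laws} of the putative density with the law of an explicit functional of Brownian motion; the argument should extend to $\Q(\sqrt{-1})$ and $\Q(\sqrt{-2})$ by exploiting the isotropy of the lattices $\Z[i]$ and $\Z[\sqrt{-2}]$ under their unit groups.
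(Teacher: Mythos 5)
The setup of your proposal --- unfolding over $K^\times$, using class number one to reduce to a sum of $f_\infty$ over (a scaled copy of) $\ocal_K\setminus\{0\}$, and invoking $\widehat{f}_\infty = f_\infty$ together with Poisson summation --- matches the paper's reduction. But the actual positivity argument is where your proposal stops short of a proof, and your diagnosis of the obstacle points you in the wrong direction. You write that ``the bound must come from global cancellation,'' and you propose to antidifferentiate to $f_\infty = \phi''$ with $\phi>0$ and then identify the sum ``with a manifestly nonnegative theta-type quantity'' via Jacobi's functional equation. That last step is precisely the content of the lemma and is not supplied: positivity of $\phi$ gives you nothing about $\sum_{\kappa\neq 0}\phi''(\kappa y)$ (note also that $\sum_{\kappa}\phi''(\kappa y)$ is not the second derivative of the theta-like function $\sum_\kappa \phi(\kappa y)$, since differentiating in $y$ produces extra factors of $\kappa$), and derivatives of theta functions are not ``manifestly nonnegative'' --- if they were, there would be nothing to prove. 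Citing the Brownian-motion identification of \cite{laws} covers only $K=\Q$, and the extension to $\Z[i]$ and $\Z[\sqrt{-2}]$ ``by isotropy'' is not an argument.

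The missing idea is a clean dichotomy under which \emph{no} cancellation beyond a single Poisson flip is needed, because every summand becomes individually nonnegative. The function $f_\infty$ is nonnegative outside a small ball (of radius $\sqrt{3/(2\pi)}$ in the real case, $1/\sqrt{\pi}$ in the complex case) and vanishes at $0$. If the scaling parameter $y$ satisfies $|y|\ge 1$ (resp.\ $|x_\infty|^2\ge 1/\pi$), every nonzero lattice point lands outside that ball and each term of $\sum_{\kappa\neq 0}f_\infty(\kappa y)$ is already $\ge 0$. If instead $|y|<1$, Poisson summation with $\widehat{f}_\infty=f_\infty$ (and $f_\infty(0)=0$, so the $\kappa=0$ terms drop on both sides) rewrites the sum as $|y|^{-1}$ times a sum over the dual lattice scaled by $y^{-1}$, and one checks that \emph{these} points all lie outside the ball: for $\Q$ this is immediate since $|y^{-1}n|>1$, while for $\Q(\sqrt{-d})$ one needs the explicit estimate that the shortest nonzero vector of $\ocal_K^* = \tfrac{1}{2\sqrt{-d}}(\Z\oplus\sqrt{-d}\Z)$ has squared length $\ge 1/(4d)$, whence $\pi|u\ell^*|^2 \ge \pi^2/(4d) > 1$ for $d=1,2$. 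This numerical check is the real crux --- it is exactly what fails for larger discriminants (see the paper's remark following the lemma) --- and your proposal never engages with it. Finally, the lemma also requires $c_K^{-1}>0$, which the paper disposes of by computing $c_K$ explicitly for the three fields; this is trivial but should be stated.
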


\begin{proof}
Of course the factor $t^{-1}$ is nonnegative, and it is easy to check that 
$$
c_K = 
\begin{cases}
1 \quad &\text{if } K = \Q, \\
\frac{\pi}{2} &\text{if } K = \Q(\sqrt{-1}), \\
\pi &\text{if } K = \Q(\sqrt{-2}),
\end{cases}
$$
so that $c_K^{-1} > 0$; it now suffices to show that the integral is nonnegative as well. 
We can use the following trick hinted at before: 
$$\int_{\A^\times_t} f(x)d^\times x = \int_{\A^\times_t/K^\times}\left(\sum_{\kappa \in K^\times}f(x\kappa)\right)d^\times x$$
so it suffices to show that 
$$\sum_{\kappa \in K^\times}f(x\kappa) \geq 0$$
for all $x \in \A^\times$. 

First suppose that $K = \Q$ so that 
$$f_\infty(x_\infty) = 2\pi x_\infty^2(2\pi x_\infty^2-3)e^{-\pi x_\infty^2},$$
and for each finite prime $p$, 
$$f_p = 1_{\Z_p}.$$
We are considering 
the sum 
$$\sum_{q \in \Q^\times} f(xq).$$
For a given $x = (x_v)_v \in \A^\times$, we have that $f(xq) = 0$ if $|x_pq|_p > 1$ for just one 
prime $p$. However, $|x_p|_p \neq 1$ for only finitely many $p$, say $p \in S$. Letting 
\[r = \prod_{p \in S}p^{v_p(x_p)}\]
we have $|x_pq|_p = |rq|_p$ for every prime $p$. However, $0 < |xq|_p = |rq|_p \leq 1$ for every 
prime $p$ if and only if $rq \in \Z\setminus\{0\}$, so $q = n/r$ for $n \in \Z\setminus\{0\}$. 
Hence, 
$$\sum_{q \in \Q^\times} f(xq) = \sum_{n \in \Z\setminus\{0\}}f_\infty\left(x_\infty \frac{n}{r}\right).$$
Let $y = x_\infty/r$. If $|y| \geq 1$, then every term in the sum on the right hand side is positive. If 
$0 < |y| < 1$, then by the Poisson summation formula and the fact that $\widehat{f}_\infty = f_\infty$, 
$$\sum_{n \in \Z\setminus\{0\}}f_\infty(yn) = |y|^{-1}\sum_{n \in \Z\setminus\{0\}}f_\infty(y^{-1}n) \geq 0.$$
Either way the sum is nonnegative, so that 
$$\sum_{q \in \Q^\times}f(xq) \geq 0$$
for any $x \in \A^\times$. 

Now suppose that $K = \Q(\sqrt{-d})$, $d = 1, 2$, so that 
$\ocal_K = \Z\oplus \sqrt{-d}\Z$ and 
$$f_\infty(x_\infty) = 4\pi|x_\infty|^2(\pi|x_\infty|^2-1)e^{-2\pi|x_\infty|^2}.$$
For the same reasons as the rational case, we can reduce our sum to 
$$\sum_{\kappa \in K^\times}f(x\kappa) = \sum_{m \in M\setminus\{0\}}f_\infty(x_\infty m)$$
where $M$ is a fractional ideal of $\ocal_K$. Because both quadratic imaginary fields being discussed have 
class number $1$ and $x_\infty$ is arbitrary, we can suppose without loss of generality that our sum is of the form 
$$\sum_{\ell \in \ocal_K\setminus\{0\}}f_\infty(x_\infty \ell).$$
When $|x_\infty|^2 \geq 1/\pi$, this sum is clearly positive. Suppose then that $0 < |x_\infty|^2 < 1/\pi$ 
and let $u = x_\infty^{-1}$, so by Poisson summation and that $\widehat{f}_\infty = f_\infty$
$$\sum_{\ell \in \ocal_K\setminus\{0\}}f_\infty(x_\infty \ell) = |u|\sum_{\ell^* \in \ocal_K^*\setminus\{0\}}f_\infty(u\ell^*)$$
where $\ocal_K^* = \frac{1}{2\sqrt{-d}}(\Z \oplus \sqrt{-d}\Z)$. But $|u|^2 \geq \pi$, and 
$|\ell^*|^2 \geq \frac{1}{4d}$, so when $d = 1, 2$ we have $\pi|u\ell^*|^2 -1 \geq \frac{\pi^2}{4d}-1 > 0$, 
so that each term of the sum is positive and therefore
$$\sum_{\ell \in \ocal_K\setminus\{0\}}f_\infty(x_\infty \ell) = |u|\sum_{\ell^* \in \ocal_K^*\setminus\{0\}}f_\infty(u\ell^*) \geq 0.$$

Hence for each $K$ above and $t \geq 0$, 
$$c_K^{-1}t^{-1}\int_{\A^\times_t}f(x)d^\times x = 
c_K^{-1}t^{-1}\int_{\A^\times_t/K^\times}\left(\sum_{\kappa \in K^\times}f(x\kappa)\right)d^\times x \geq 0.$$
\end{proof}

\begin{remark}
For imaginary quadratic fields of discriminant $d\geq3$ and class number $1$, 
we no longer have that the sum $\sum_{\ell^*\in \ocal_K\setminus\{0\}}f(u\ell^*)$ 
has only positive summands, so it is unclear if the sum would be positive. 
Fields which are neither the rationals nor imaginary quadratic have more than one archimedean place, and again 
one runs into issues applying the same Poisson summation argument directly, so a stronger argument would be required to generalize to 
other number fields. 
\end{remark}

We can now prove our main theorem: 

\begin{proof}[Proof of \thref{rv}]
For $t > 0$, and $f$ defined above, let 
$$\psi(t) := c_K^{-1}t^{-1}\int_{\A_t^\times}f(x)d^\times x.$$
Then $\psi(t) \geq 0$ by \thref{positivelem}, for each $s \in \C^\times$ 
$$\int_0^\infty t^s\psi(t)dt = c_K^{-1}\int_0^\infty t^s \left(\int_{\A^\times_t}f(x)d^\times x\right)\frac{dt}{t} = c_K^{-1}s(s-1)Z_K(s),$$
and as $s \to 0$, $c_K^{-1}s(s-1)Z_K(s) \to 1$, so that $\int_0^\infty \psi(t)dt = 1$. 
Hence, $\psi$ is the density of a random variable satisfying the above conditions. 
\end{proof}

\begin{remark}
We can view the random variable $X$ from a more adelic perspective. Note that in proving \thref{positivelem} and \thref{rv} we have 
additionally shown that $c_K^{-1}\left(\sum_{\kappa \in K^\times} f(x\kappa)\right)d^\times x$ is 
a probability measure on the idele class group $\A^\times/K^\times$. Letting $Y$ be the idelic random variable with this distribution, and $|\cdot|$ 
the idele-norm, 
we then have that $\E |Y|^s = |D|^{-s/2}\xi_K(s) = \E(X^s)$ for every $s \in \C$. Hence $\log X$ and $\log|Y|$ have the same moment generating functions, 
so that $X = |Y|$ almost surely. 
\end{remark}

\subsection{An application to Li coefficients}

The \(n\)th Li coefficient of \(\xi_K\) is defined in \cite{li} as
\[\lambda_n := \frac{1}{(n-1)!}\frac{d^n}{ds^n}\left[ s^{n-1}\log \xi_K(s)\right]_{s = 1}.\]
Using the probabilistic interpretation of \(\xi_K\), we prove the following: 

\begin{proposition}
Let \(K\) be one of $\Q$, $\Q(\sqrt{-1})$, or $\Q(\sqrt{-2})$, and $\lambda_n$ the Li coefficients for $\xi_K$. Then 
\(\lambda_1, \lambda_2 > 0\). 
\end{proposition}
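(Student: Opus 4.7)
The plan is to express the Li coefficients $\lambda_1$ and $\lambda_2$ as cumulants of a random variable obtained by size-biasing the $X$ from \thref{rv}, and then to deduce positivity from Jensen's inequality together with positivity of variance. First I rescale so that moments line up with $\xi_K$ exactly: set $X_0 = |D|^{1/2} X$, so that $\E(X_0^s) = \xi_K(s)$ and in particular $\E(X_0) = \xi_K(1) = 1$. The Li coefficients involve derivatives of $\log \xi_K$ at $s = 1$, while the cumulants of $\log X_0$ are derivatives at $s = 0$, so I tilt to shift the base point. Let $\tilde X$ be the size-biased version of $X_0$, whose density is $x$ times that of $X_0$; since $X_0 > 0$ almost surely and $\E(X_0) = 1$, this is a valid probability density. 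A direct computation gives
\[
\E(\tilde X^s) = \E(X_0^{s+1}) = \xi_K(s+1),
\]
so the cumulant generating function of $\log \tilde X$ is $\log \xi_K(s+1)$, and its $n$th cumulant is $\tilde\kappa_n = (\log \xi_K)^{(n)}(1)$.

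Expanding the definition of the Li coefficients yields
\[
\lambda_1 = (\log \xi_K)'(1) = \tilde\kappa_1, \qquad \lambda_2 = 2(\log \xi_K)'(1) + (\log \xi_K)''(1) = 2\tilde\kappa_1 + \tilde\kappa_2.
\]
Next I use the functional equation in the form $\xi_K(s+1) = \xi_K(-s)$, which says that the moment generating function of $\log \tilde X$ coincides with that of $-\log X_0$; in particular $\tilde\kappa_1 = -\E(\log X_0)$. Strict Jensen's inequality, applied to $\log$ and the non-degenerate positive random variable $X_0$, gives $\E(\log X_0) < \log \E(X_0) = 0$, so $\lambda_1 = \tilde\kappa_1 > 0$. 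For $\lambda_2 = 2\tilde\kappa_1 + \tilde\kappa_2$, the first term is positive by what has just been shown, and $\tilde\kappa_2 = \mathrm{Var}(\log \tilde X) > 0$ since $\tilde X$ has a non-degenerate distribution; hence $\lambda_2 > 0$.

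The main conceptual obstacle is choosing the right random variable: $X_0$ itself records derivatives at $s = 0$, whereas the Li coefficients record derivatives at $s = 1$, so the size-biasing step is essential to bridge the two pictures. Once the cumulant interpretation is in place, the functional equation translates the Li computation into two textbook probabilistic inequalities, and no further analytic work is required.
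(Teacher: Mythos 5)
Your proof is correct and is essentially the paper's argument: both identify $\lambda_1$ and $\lambda_2$ with cumulants of $-\log X$ (up to the constant $\tfrac12\log|D|$), use strict Jensen for $\lambda_1$, and use $\lambda_2 = 2\lambda_1 + \kappa_2$ with $\kappa_2 = \mathrm{Var} \geq 0$ for $\lambda_2$. Your size-biased variable $\tilde X$ is just a repackaging of the paper's evaluation of the cumulant generating function at the shifted base point $s=1$, since by the functional equation $\log\tilde X$ has the same law as the paper's $L = -\log X$ shifted by $\tfrac12\log|D|$.
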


\begin{proof}
Let \(X\) be the random variable constructed in \thref{rv} so that for each \(s \in \C\), 
\[\E(X^s) = |D|^{-s/2}\xi_K(s),\]
and functional equation of \(\xi_K(s)\) gives 
\[\E(X^{1-s}) = |D|^{(s-1)/2}\xi_K(s).\]
Let \(L := -\log X\), so that the first cumulant of \(L\) is given by 
\begin{align*}
\kappa_1 &= \frac{d}{ds}\left[ \log \E(e^{(s-1)L}) \right]_{s = 1} \\
	&= \frac{d}{ds} \left[ \log \E (X^{1-s}) \right]_{s=1} \\
	&= \frac{d}{ds}\left[ \log\left( |D|^{(s-1)/2}\xi_K(s)\right)\right]_{s=1} \\
	&= \log\sqrt{|D|} + \lambda_1.
\end{align*}
But by a basic property of cumulants and Jensen's inequality, 
\begin{align*}
\kappa_1 = \E(L) = \E(-\log X) > -\log \E(X) = \log\sqrt{|D|}
\end{align*}
so \(\lambda_1 > 0\). Similarly, the second cumulant of \(L\) is 
\begin{align*}
    \kappa_2 &= \frac{d^2}{ds^2}\log\left[|D|^{(s-1)/2}\xi_K(s)\right]_{s=1} \\
    &= \frac{d^2}{ds^2}\left[\log\xi_K(s)\right]_{s=1}
\end{align*}
and \(\kappa_2 = \text{Var} (L) \geq 0\), so that
\begin{align*}
    \lambda_2 &= \frac{d^2}{ds^2}\left[s\log\xi_K(s) \right]_{s=1} \\
    &= \frac{d}{ds}\left[\log\xi_K(s) + s\frac{d}{ds}\log\xi_K(s)\right]_{s=1} \\
    &= 2\lambda_1 + \left[s\frac{d^2}{ds^2}\log\xi_K(s)\right]_{s=1} \\
    &= 2\lambda_1 + \kappa_2 \\
    &> 0.
\end{align*}
Therefore, the first two Li coefficients are positive for $\Q$, $\Q(\sqrt{-1})$, and $\Q(\sqrt{-2})$.
\end{proof}

\subsection*{Acknowledgements}
\noindent I am incredibly grateful to Professor M. Ram Murty for his supervision during this project. 
	His comments, encouragement, and advice had a profound impact on both the outcome of this paper and my mathematical development. 
	I would also like to thank Professor Francesco Cellarosi and Professor Daniel Johnstone for their helpful comments and aid while working on this paper.

\end{document}